\newcommand{\abs}[1]{\left\lvert#1\right\rvert}
\newcommand{\kh}[1]{\left(#1\right)}%
\newcommand{\zkh}[1]{\left[#1\right]}%
\newcommand{\dkh}[1]{\left\{#1\right\}}%
\newtheorem{theorem}{Theorem}
\newtheorem{lem}[theorem]{Lemma}
\newtheorem{conj}{Conjecture}
\begin{document}

\title[Minimal hypersurface in $\mathbb S^5$]{Minimal hypersurfaces in $\mathbb S^5$ with constant scalar curvature and zero Gauss curvature are totally geodesic}

\author{Qing Cui}
\address{School of Mathematics \\ Southwest Jiaotong University \\ 611756 Chengdu \\ Sichuan \\ China} 
\email{cuiqing@swjtu.edu.cn}

 \begin{abstract}
We show that a closed minimal hypersurface in $\mathbb S^5$ with constant scalar curvature and zero Gauss curvature is totally geodesic.
\end{abstract}
\maketitle

\section{Introduction}
Let $M$ be a minimal hypersurface in $\mathbb S^{n+1}$ with second fundamental form $A$. In 1968, Simons \cite{Sim68} derived the famous Simons' identity:
\begin{align}\label{Simonsiden}
\dfrac{1}{2}\Delta \abs{A}^2=\abs{\nabla A}^2 +\kh{n-\abs{A}^2}\abs{A}^2,
\end{align}
where $\nabla A$ is the covariant derivation of $A$. Therefore, if $M$ is closed and assume $0\le \abs{A}^2 \le n$, then the divergence theorem gives $\abs{A}^2\equiv 0$ or $\abs{A}^2\equiv n$. 
This pinching phenomenon has attracted  many mathematicians' attention. Chern-do Carmo-Kobayashi \cite{ChedoCKob70} and Lawson \cite{Law69}  independently characterized the case of $\abs{A}^2\equiv n$: the Clifford hypersurfaces are the only minimal hypersurfces in $\mathbb S^{n+1}$ with $\abs{A}^2\equiv n$. Assume $R_M$ is the scalar curvature of $M$. By the Gauss equation, one has 
$\abs{A}^2=n(n-1)+R_M$. Therefore, $M$ has constant scalar curvature iff $\abs{A}^2$ is constant. The authors \cite{ChedoCKob70} proposed the following conjecture:
\begin{conj}\label{Chernconj}
Let $M^n$ be a  closed  immersed  minimal  hypersurface of the unit sphere $\mathbb{S}^{n+1}$ with constant scalar curvature $R_M$. Then for each $n$, the set of all possible values for $R_M$  is discrete.
\end{conj}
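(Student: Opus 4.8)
The plan is to reduce the conjecture to a rigidity statement about the higher curvature invariants and then to an algebraic finiteness statement. Writing $S=\abs{A}^2$, the Gauss equation shows that constant scalar curvature is equivalent to $S$ being constant, so I would fix $S$ and aim to prove that the attainable values of $S$ have no accumulation point. The base of the hierarchy is Simons' identity \eqref{Simonsiden}: since $\Delta S=0$, it already forces $\abs{\nabla A}^2=\kh{S-n}S$ to be a nonnegative constant, recovering the first gap $S\in\dkh{0}\cup[n,\infty)$ together with the rigidity at $S=n$ (Clifford hypersurfaces, by Chern--do Carmo--Kobayashi and Lawson). The guiding idea is that this is only the first instance of an infinite system of polynomial relations among the power sums $f_k=\sum_i\lambda_i^k$ of the principal curvatures, subject to the constraints $f_1=0$ (minimality) and $f_2=S$, and that this system should ultimately pin $S$ down to the roots of a universal polynomial $P_n$ depending only on $n$.

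First I would build the hierarchy. Differentiating Simons' identity and iterating the Bochner--Weitzenb\"ock computation, one obtains formulas for $\Delta\abs{\nabla A}^2$, $\Delta f_3$, $\Delta f_4,\dots$ in which the leading terms are squared norms of higher covariant derivatives of $A$ and the lower-order terms are polynomials in the $f_k$ and their first derivatives, with coefficients depending only on $n$. Integrating each such identity over the closed manifold annihilates the Laplacian and produces an integral relation; the constancy of $S$ and of $\abs{\nabla A}^2$ makes the first few of these relations genuinely algebraic in $S$. The strategy is then to show that these integral identities, taken together, force every $f_k$ to be constant, i.e. that $M$ is isoparametric (the strong form of the conjecture). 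Once isoparametricity is known, M\"unzner's theory restricts the number $g$ of distinct principal curvatures to $g\in\dkh{1,2,3,4,6}$ and rigidly determines the multiplicities and curvature values; the minimal isoparametric hypersurfaces then realize only finitely many explicitly computable values of $S$ for each $n$, and discreteness follows at once.

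The main obstacle is closing this system in the absence of any a priori control on the higher power sums. For $k\ge 3$ the functions $f_k$ are not known to be constant, and the integral relations couple $S$ with integrals of products such as $\int_M f_3^2$, $\int_M f_2 f_4$, and contractions of $\nabla A$ against the curvature, none of which can be evaluated without further input. The crux is therefore exactly the passage from ``$\abs{A}^2$ constant'' to ``all $f_k$ constant'', equivalently the proof that $M$ is isoparametric; this is the open heart of the conjecture and the reason it remains unresolved for general $n$. In low dimensions, or under an extra geometric hypothesis that trivializes one of the higher invariants---for instance $n=4$ together with vanishing Gauss--Kronecker curvature, as exploited in the present paper---the hierarchy truncates after finitely many steps, the surviving $f_k$ become algebraically determined by $S$, and the finiteness argument can be completed. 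Carrying this out uniformly in $n$, without such a structural hypothesis, is the principal difficulty.
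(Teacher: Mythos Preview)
The statement you are attempting to prove is Conjecture~\ref{Chernconj}, the Chern conjecture itself. The paper does not prove this conjecture; it is stated in the introduction purely as background and motivation, and it remains open for $n\ge 4$. There is therefore no ``paper's own proof'' to compare against.

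Your proposal is not a proof either, and you essentially say so yourself: the entire argument hinges on the step ``show that these integral identities, taken together, force every $f_k$ to be constant, i.e.\ that $M$ is isoparametric,'' and you then acknowledge that ``this is the open heart of the conjecture and the reason it remains unresolved for general $n$.'' That is an accurate assessment of the difficulty, but it means the proposal is a sketch of a program, not a proof. The hierarchy of Peng--Terng type identities for $\Delta f_k$ does exist, but the integral relations they produce involve the quantities $\mathscr{A}$, $\mathscr{B}$, $\mathscr{C}$ (contractions of $\nabla A$ against powers of $A$) and higher analogues, and there is no known mechanism in general dimension to eliminate these and close the system. The reduction to M\"unzner's classification is exactly Conjecture~\ref{StrongversionChernconj}, which is strictly stronger than what you are trying to prove and is likewise open.

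In short: the gap is not a missing lemma but the entire core of the argument, and you have correctly identified it as such. What the paper actually does is treat the very special case $n=4$, $\mathcal{K}\equiv 0$, where the extra hypothesis forces $f_4=\tfrac{1}{2}\abs{A}^4$ to be constant and allows the system to close via Lemma~\ref{ChengYanglemma} and the Gauss--Bonnet--Chern formula.
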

Now this conjecture is known as {\it Chern conjecture}.
In 1983, Peng and Terng \cite{PenTer83, PenTer83a} made the first breakthrough towards Chern  Conjecture, they showed:
\emph{If $\abs{A}^2 > n$, then $\abs{A}^2 > n+ \frac{1}{12n}$. Moreover, for $n = 3$,  $\abs{A}^2 \ge  6$ if $\abs{A}^2 > 3$. } 
In 1993, Chang \cite{Cha93}  completed the proof of Chern  Conjecture for $n=3$. After that,
 Yang-Cheng \cite{YanChe98}  and  
 Suh-Yang  \cite{SuhYan07}  improved the constant from $\frac{1}{12n}$ to $\frac{3n}{7}$. Based on known examples, mathematicians believe this constant should be $n$, that is, if $\abs{A}^2\equiv const.>n$, then $\abs{A}^2\ge 2n$ (this is usually called {\it the second pinching conjecture}).  Up to now, even the second pinching conjecture is still open for higher dimension.

Note that, in Simons' pinching result, $\abs{A}^2$ is not required  to be constant. Thus, many mathematicians believe the second pinching conjecture still holds without the assumption that  $\abs{A}^2$ is constant. In this case, Peng-Terng \cite{PenTer83, PenTer83a} showed that there does exist a pinching phenomenon if $\abs{A}^2>n$ and $n\le 5$.
Later, Cheng-Ishikawa \cite{CheIsh99},  Wei-Xu \cite{WeiXu07} and Zhang \cite{Zha10}  promoted it to $n\leq8$. 
 Finally,   Ding-Xin \cite{DinXin11} proved  that if  $\abs{A}^2>n$ then $\abs{A}^2>n+\frac{n}{23}$ for all dimension $n$.  After that,  Xu-Xu \cite{XuXu17} and  Lei-Xu-Xu \cite{LeiXuXu17} improved the pinching constant $\frac{n}{23}$ to $\frac{n}{18}$. 

Since all the known  minimal hypersurfaces in $\mathbb S^{n+1}$ with constant scalar curvature are isoparametric (equivalently to have constant principal curvatures), mathematicians proposed the following {\it strong version of Chern conjecture}:
\begin{conj}\label{StrongversionChernconj}
A  closed  immersed  minimal  hypersurface $M^n$ of $\mathbb{S}^{n+1}$ with constant scalar curvature must be isoparametric.
\end{conj}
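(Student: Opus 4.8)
The plan is to prove that $M$ is isoparametric by showing directly that the characteristic polynomial of the shape operator $A$ is pointwise constant, since a hypersurface is isoparametric precisely when its principal curvatures are constant, equivalently when all power sums $f_k := \operatorname{tr}(A^k)$ (or, by Newton's identities, all elementary symmetric functions of the eigenvalues of $A$) are constant on $M$. Minimality gives $f_1 \equiv 0$, and the hypothesis that $R_M$ is constant is equivalent to $f_2 = |A|^2$ being constant. Plugging $\Delta |A|^2 = 0$ into Simons' identity \eqref{Simonsiden} yields the rigidity relation
\begin{equation}\label{plan:simons}
|\nabla A|^2 = \kh{|A|^2-n}\,|A|^2 ,
\end{equation}
so $|\nabla A|^2$ is itself a nonnegative constant. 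When $|A|^2 \le n$ the right-hand side of \eqref{plan:simons} is nonpositive, forcing $\nabla A \equiv 0$ and $|A|^2 \in \dkh{0,n}$; by the Chern-do Carmo-Kobayashi and Lawson rigidity theorems $M$ is then totally geodesic or a Clifford hypersurface, both isoparametric. Hence everything reduces to the range $|A|^2 > n$, where $|A|^2$ and $|\nabla A|^2$ are both positive constants and the task becomes to prove $\nabla f_k \equiv 0$ for $k = 3, \ldots, n$.

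To reach that, I would build the second-order analogues of Simons' identity. Writing $A_{ijk} := \nabla_k A_{ij}$, which is totally symmetric by the Codazzi equation, one computes $\tfrac12 \Delta |\nabla A|^2$ and, more generally, $\Delta f_k$ for each $k$; commuting covariant derivatives and using the Gauss equation produces a Bochner-type hierarchy in which $\Delta f_k$ is expressed through $f_{k+1}$, $f_{k+2}$, the constant $|A|^2$, the second-order quantity $|\nabla^2 A|^2$, and curvature contractions of $\nabla A$ with powers of $A$. Since $M$ is closed, integrating each identity annihilates the Laplacian via the divergence theorem and converts the hierarchy into a finite system of integral identities relating the $\int f_k\,dV$ and the gradient terms. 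The strategy is to feed the two constants from \eqref{plan:simons} into this system and solve it forward: combine the integrated identities with Cauchy-Schwarz and the pointwise algebraic relations among $\operatorname{tr}(A^j)$ to force $\int |\nabla f_3|^2\,dV = 0$, hence $\nabla f_3 \equiv 0$, then iterate to obtain $\nabla f_4 \equiv 0, \ldots, \nabla f_n \equiv 0$. Once all the $f_k$ are constant the characteristic polynomial of $A$ has constant coefficients, its roots --- the principal curvatures --- are constant, and $M$ is isoparametric.

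The hard part, and the reason this is the \emph{strong} Chern conjecture rather than a theorem, lies precisely in closing the middle step when $n \ge 4$. Simons' identity \eqref{plan:simons} controls only the single quantity $f_2$; there is no known general mechanism propagating its constancy to $f_3, \ldots, f_n$. The integral system coming from the higher Laplacians is underdetermined, because each new identity introduces the next power sum together with uncontrolled Hessian and gradient terms, so closing it demands a positivity or rigidity input strictly beyond \eqref{plan:simons}. For $n = 3$ the difficulty collapses: the only undetermined symmetric function is $\det A = \tfrac13 f_3$, and an integral formula for $\int \det A\,dV$ suffices to pin it down. In higher dimensions no such self-contained argument is available in general, and partial progress has required auxiliary hypotheses --- for instance, control of the Gauss-Kronecker curvature in dimension $n = 4$, which is exactly the regime where the characteristic polynomial has the fewest remaining unknowns. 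Extending such control to all of $f_3, \ldots, f_n$, with no extra assumptions and in arbitrary dimension, is the genuinely open core of the problem.
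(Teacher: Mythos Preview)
The statement you are trying to prove is Conjecture~\ref{StrongversionChernconj}, which the paper records as an \emph{open problem}; there is no proof of it in the paper to compare against. The paper's own contribution is Theorem~\ref{mainthm}, a very special case in dimension four under the extra hypothesis $\mathcal{K}\equiv 0$, and even that requires several ad hoc ingredients (Lemma~\ref{lem2}, the Gauss--Bonnet--Chern formula, and the Cheng--Yang pinching Lemma~\ref{ChengYanglemma}) rather than the Bochner-hierarchy scheme you outline.

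Your proposal is honest about its own gap: you correctly note that Simons' identity fixes only $f_2$, and that the integrated identities for $\Delta f_k$ introduce new unknowns faster than they produce equations, so the system does not close for $n\ge 4$ without additional input. That is not a technical wrinkle to be ironed out later; it is exactly the content of the conjecture. In particular, the step ``combine the integrated identities with Cauchy--Schwarz \ldots\ to force $\int|\nabla f_3|^2\,dV=0$'' is not something anyone currently knows how to do in general, and your write-up provides no mechanism for it. What you have written is a reasonable description of why the problem is hard and of the strategy that works for $n=3$, but it is not a proof, and you acknowledge as much in your final paragraph. If submitted as a proof it would have to be rejected on the grounds that the decisive step is asserted rather than carried out.
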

Actually, if $M$ is a minimal isoparametric hypersurface in $\mathbb S^{n+1}$, according to M\"{u}nzner's result \cite{Mun80}, $\abs{A}^2$  can only take values in the set $\dkh{0, n, 2n, 3n, 5n}$. It is worth pointing out that,
isoparametric hypersurfaces in $\mathbb S^{n+1}$ are completely classified until recently (see \cite{CecChiJen07, Chi11, Chi13, Chi20, DorNeh85, Imm08, Miy13,Miy16}). When $n=3$, Chang \cite{Cha93} gave an affirmative answer to Conjecture \ref{StrongversionChernconj}. 
When $n=4$, 
Deng-Gu-Wei \cite{DenGuWei17} proved Conjecture \ref{StrongversionChernconj} with an additional assumption that $M^4$ is Willmore. Note that in this case, $M$ is Willmore iff $f_3\equiv 0$ (see \cite{Li01}), where $f_r$ denotes the $r$-th power sum of the principal curvatures for each positive integer $r$. With assumption that  $R_M\geq0 $,  $f_3$ and  the number of distinct principal curvatures $g$ are constant, Tang and Yang \cite{TanYan18} proved Conjecture \ref{StrongversionChernconj}. Recently, Li \cite{Li22} showed Conjecture \ref{StrongversionChernconj} is true  
if $f_3$ is constant satisfies $f_3^2\le 72$ and the Gauss curvature $\mathcal{K}$ satisfies $\mathcal{K}\le 1$.
For higher dimensional case, with assumptions that the Gauss curvature $\mathcal{K}$ is constant and $M$ has three pairwise distinct principal curvatures everywhere, de Almeida-Brito-Scherfner-Weiss \cite{deABriSchWei18} showed that  Conjecture \ref{StrongversionChernconj} is true. Recently, Tang-Wei-Yan  \cite{TanWeiYan18} and  Tang-Yan \cite{TanYan23} showed Conjecture \ref{StrongversionChernconj} is true if $f_r$ is constant for each $k=1, \cdots, n-1$ and $R_M\ge 0$. Their results strongly support Conjecture \ref{StrongversionChernconj} and generalized de Almeida-Brito's results in \cite{deABri90}.

In this paper, we focus on the case of dimension $n=4$ and $M$ has vanishing Gauss curvature. In this case, a minimal isoparametric hypersurface must be totally geodesic (see Lemma \ref{lem1} in Section 3). Therefore,  based on the Conjecture \ref{StrongversionChernconj}, a natural question is: Is a closed minimal hypersurface with constant scalar curvature and zero Gauss curvature totally geodesic? We give an affirmative answer to this question as follows:
\begin{theorem}\label{mainthm}
Let $M$ be a closed minimal hypersurface in $\mathbb S^5$ with constant scalar curvature and zero Gauss curvature, then $M$ is totally geodesic.
\end{theorem}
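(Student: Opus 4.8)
\noindent\textit{Outline of proof.}
Write $S=\abs{A}^{2}$ and, for each integer $r\ge 1$, let $f_{r}=\operatorname{tr}\kh{A^{r}}$ be the $r$-th power sum of the principal curvatures of $M$; minimality gives $f_{1}=0$, and the Gauss equation makes $S=f_{2}$ a constant. The plan is to prove that $M$ is isoparametric and then to quote Lemma~\ref{lem1}. From $\Delta S=0$ and Simons' identity \eqref{Simonsiden} we obtain
\begin{align}\label{pp1}
\abs{\nabla A}^{2}=(S-4)S ,
\end{align}
so $(S-4)S=\abs{\nabla A}^{2}\ge 0$ forces $S=0$ or $S\ge 4$. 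If $S=0$ then $M$ is totally geodesic and we are done. If $S=4$ then $\nabla A\equiv 0$ by \eqref{pp1}, so $A$ is parallel, $M$ is isoparametric, and since $\mathcal K\equiv 0$ Lemma~\ref{lem1} forces $M$ totally geodesic, i.e.\ $S=0$, a contradiction. Hence it remains to rule out $S>4$, which we assume from now on.

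Because $\det A=\mathcal K\equiv 0$ and $\operatorname{tr}A=0$, the characteristic polynomial of $A$ equals $t^{4}-\tfrac{S}{2}t^{2}-\tfrac{f_{3}}{3}t$, so by the Cayley--Hamilton theorem
\begin{align}\label{pp2}
A^{4}=\tfrac{S}{2}A^{2}+\tfrac{f_{3}}{3}A
\end{align}
holds pointwise. Taking traces in \eqref{pp2} shows that $f_{4}=\tfrac{1}{2}S^{2}$ is constant as well, while $f_{5}=\tfrac{5}{6}S f_{3}$ and $f_{6}=\tfrac{1}{4}S^{3}+\tfrac{1}{3}f_{3}^{2}$; moreover the three nonzero principal curvatures, whose sum is $0$, satisfy $f_{3}^{2}\le\tfrac{1}{6}S^{3}$ at every point, with equality exactly where two of them coincide. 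I would then differentiate, using the Simons equation in the form $\Delta h_{ij}=(4-S)h_{ij}$, to compute $\Delta f_{3}$, $\Delta f_{4}$ and $\Delta f_{6}$ in terms of $S$, $f_{3}$ and the gradient quantities $\sum_{l}\operatorname{tr}\kh{(\nabla_{l}A)^{2}A}$, $\sum_{l}\operatorname{tr}\kh{(\nabla_{l}A)^{2}A^{2}}$ and $\sum_{l}\operatorname{tr}\kh{(\nabla_{l}A)A(\nabla_{l}A)A}$: constancy of $S$ and $f_{4}$ turns $\Delta f_{4}=0$ into an algebraic identity among these quantities that supplements \eqref{pp1}, and $\int_{M}\Delta f_{3}=\int_{M}\Delta f_{6}=0$ give two further integral relations, the second of which involves $\int_{M}\abs{\nabla f_{3}}^{2}$.

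By real-analyticity of the minimal immersion, $f_{3}$ is either constant on $M$ or constant on no nonempty open set. If $f_{3}$ is constant, then \eqref{pp2} shows that $A$ has constant principal curvatures with constant multiplicities (the nonzero ones being the roots of a fixed cubic), so $M$ is isoparametric and, as before, Lemma~\ref{lem1} gives $S=0$, contradicting $S>4$. Hence we may assume $f_{3}$ is non-constant; then $\dkh{f_{3}=0}$ and $\dkh{f_{3}^{2}=\tfrac{1}{6}S^{3}}$ are nowhere dense, so on an open dense set $U'$ the shape operator $A$ has four distinct principal curvatures $0,\mu_{1},\mu_{2},\mu_{3}$ with $\mu_{1}+\mu_{2}+\mu_{3}=0$, and the eigenvalue functions $\mu_{i}$ and the line field $\ker A$ are smooth on $U'$. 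There I would work in an orthonormal frame diagonalizing $A$, use the Codazzi equation — in particular $A\kh{\nabla_{X}e_{4}}=-\kh{\nabla_{e_{4}}A}\kh{X}$ for $X\perp e_{4}$, where $e_{4}$ spans $\ker A$ — to express $\nabla\mu_{i}$ and the off-diagonal components $h_{ijk}$ of $\nabla A$ through the connection forms, substitute into the identities for $\Delta f_{3}$, $\Delta f_{4}$, $\Delta f_{6}$, combine with \eqref{pp1}, \eqref{pp2} and the constancy of $S$ and $f_{4}$, and integrate over the closed manifold $M$; I expect this to yield an integral identity forcing $\nabla A\equiv 0$, which contradicts \eqref{pp1} since $S>4$. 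This would complete the proof.

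The main obstacle is precisely this last computation. The higher Simons-type identities couple the fully symmetric, totally trace-free tensor $\nabla A$ with powers of $A$ through terms such as $\sum_{l}\operatorname{tr}\kh{(\nabla_{l}A)^{2}A^{k}}$ that are not manifestly sign-definite, so one needs Peng--Terng-type sharp pointwise estimates for $\nabla A$ that exploit the prescribed vanishing principal curvature and the bound $f_{3}^{2}\le\tfrac{1}{6}S^{3}$; and one must control the argument on the nowhere dense (but possibly nonempty) set where the multiplicity of the zero eigenvalue or of the $\mu_{i}$ jumps, so that the eigenframe computations on $U'$ and the integrations by parts remain valid on all of $M$.
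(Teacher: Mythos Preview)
Your proposal has a genuine gap: the decisive step --- deriving from the identities for $\Delta f_{3}$, $\Delta f_{4}$, $\Delta f_{6}$ an integral relation that forces $\nabla A\equiv 0$ when $f_{3}$ is non-constant --- is not carried out, and you yourself flag it as ``the main obstacle''. The gradient terms $\sum_{l}\operatorname{tr}\kh{(\nabla_{l}A)^{2}A^{k}}$ are not sign-definite, and you give no indication of which combination of identities, or which Peng--Terng-type inequality, would actually close the argument. As written this is a plausible programme, not a proof; and in fact proving isoparametricity directly in this way amounts to proving Conjecture~\ref{StrongversionChernconj} in a special case, which is not known to follow from the tools you list.

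The paper avoids this computation entirely by bringing in two ingredients absent from your outline. First, a pointwise argument at a maximum of $f_{3}$ (Lemma~\ref{lem2}) shows that the smallest principal curvature can never have multiplicity two; combined with $\mathcal K=0$ this gives a nowhere vanishing line field and hence $\chi(M)=0$ by Poincar\'e--Hopf. Second, the four-dimensional Gauss--Bonnet--Chern formula (Lemma~\ref{GBClem}), together with $f_{4}=\tfrac{1}{2}S^{2}$ and $\chi(M)=0$, reduces to $\int_{M}(S-6)=0$, so $S=6$. This is then contradicted by the Cheng--Yang second-pinching estimate $S\ge\tfrac{20}{3}$ (Lemma~\ref{ChengYanglemma}), applicable because $f_{4}$ is constant. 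The contradiction thus comes from a topological input plus a known pinching theorem, not from showing $M$ isoparametric.
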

We emphasize that without the assumption of constant scalar curvature, there are many non-totally geodesic minimal hypersurfaces in a unit sphere with zero Gauss curvature, cf. \cite{Ram90}.

This paper is arranged as follows. In Section 2, we  set notations and review some known formulas and results. In Section 3, we give the proof of Theorem \ref{mainthm}.

\vspace{1cm}
\section{Preliminary}
In this section, we will fix some notations and list several known formulas.

Let $M$ be a minimal hypersurface in $\mathbb S^5$ with second fundamental form $A$. We choose a local orthonormal frame $\dkh{e_i}_{i=1}^5$ in $\mathbb S^5$ such that $e_1, \cdots, e_4$ are tangent to $M$. Let $\dkh{\omega^i}_{i=1}^5$ be the dual frame of $\dkh{e_i}_{i=1}^5$.  Then $A$ can be written as $A=\sum_{i,j=1}^4h_{ij}\omega^i\otimes \omega^j$ with $h_{ij}=h_{ji}$. 
Unless otherwise specified, throughout this paper, the summation is always from 1 to 4.
The minimality of $M$ implies $0=H=\dfrac{1}{4}\sum_i h_{ii}$.
Set 
\begin{align*}
F_{,i}=\nabla_i F, \ \ F_{,ij}=\nabla_j\nabla_i F, \ \ h_{ijk}=\nabla_k h_{ij} \ \text{and} \  h_{ijkl}=\nabla_l\nabla_k h_{ij},
\end{align*}
where $\nabla_j$ is the covariant differentiation operator with respect to $e_j$. 
The Gauss equation, the Codazzi equation and the Ricci formulas are given by (cf. \cite{ChedoCKob70})
\begin{align}\label{Gausseq}
R_{ijkl}=&\delta_{ik}\delta_{jl}-\delta_{il}\delta_{jk}+h_{ik}h_{jl}-h_{il}h_{jk}.\\
h_{ijk}=&h_{ikj}.\label{Codazzieq}\\
h_{ijkl}-h_{ijlk}
=&\sum_m h_{im}R_{mjkl}+\sum_m h_{mj}R_{mikl}.\label{Ricciiden}
\end{align}
From the Gauss equation \eqref{Gausseq}, it follows that 
$
R_M=\sum_{i,j}R_{ijij}=12-\abs{A}^2.
$
Therefore, $\abs{A}^2$ is constant iff the scalar curvature is constant.
If $M$ has constant scalar curvature, Simons' identity \eqref{Simonsiden} implies
\begin{align}
\abs{\nabla A}^2=\abs{A}^2\kh{\abs{A}^2-4}.\label{nablaA}
\end{align}
Assume $A$ has principal curvature $\lambda_1,\cdots, \lambda_4$ at some point.
Combining \eqref{Gausseq} with  \eqref{Ricciiden} gives
\begin{align}
h_{ijkl}-h_{ijlk}=\kh{\lambda_i-\lambda_j}\kh{1+\lambda_i\lambda_j}\kh{\delta_{ik}\delta_{jl}-\delta_{il}\delta_{jk}}.\label{tijkl}
\end{align}

For each positive integer $k$, denote by 
\begin{align}\label{f_k}
f_k={\rm tr} A^k,\quad \sigma_k=\sum_{i_1<\cdots<i_k}\lambda_{i_1}\cdots\lambda_{i_k}.
\end{align}
Although $f_k$ and $\sigma_k$ are defined by using the principal curvatures, they are actually globally well defined smooth functions.
A direct computation gives,
\begin{align}
\begin{cases}\label{f1234}
&f_1 = \sigma_1=4H=0, \\ 
&f_2 =\sigma_1^2-2\sigma_2=\abs{A}^2,\\
&f_3 = \sigma_1^3-3\sigma_1\sigma_2+3\sigma_3
=3\sigma_3,\\
&f_4 = \sigma_1^4-4\sigma_1^2\sigma_2
+4\sigma_1\sigma_3+2\sigma_2^2-4\sigma_4=\frac{1}{2}\abs{A}^4-4\mathcal{K},
\end{cases}
\end{align}
where $\mathcal{K}=\sigma_4=\lambda_1\lambda_2\lambda_3
\lambda_4$
is the Gauss curvature (also called Gauss-Kronecker curvature). Note that 
\begin{align}\label{K=0}
\mathcal{K}\equiv 0 \ \ \text{iff}\ \ f_4\equiv\dfrac{1}{2}\abs{A}^4.
\end{align}
The following identities are due to Peng and Terng \cite{PenTer83},
\begin{align}
\Delta f_3=&3\kh{4-\abs{A}^2}f_3+6\mathscr{C}, \label{Deltaf3}\\
\Delta f_4 =& 4\kh{4-\abs{A}^2}f_4 +4\kh{2\mathscr{A}+\mathscr{B}},\label{Deltaf4}
\end{align}
where 
\begin{align}\label{AandB}
\mathscr{A}=\sum_{i,j,k} \lambda_i^2 h_{ijk}^2,\quad \mathscr{B}=\sum_{i,j,k} \lambda_i\lambda_jh_{ijk}^2,\quad \mathscr{C}=\sum_{i,j,k}\lambda_ih_{ijk}^2.
\end{align}

\vspace{1cm}
\section{Proof of the main theorem}

The following result is due to Cartan \cite{Car38}.
\begin{lem}[{cf. \cite[page 84]{BerConOlm03}}]
Let $M$ be an isoparametric hypersurface in $\mathbb S^{n+1}$. Denote by $g$ the number of distinct principal curvatures, and by $\mu_1, \cdots, \mu_g$ the distinct principal curvatures with corresponding multiplicities $m_1, \cdots, m_g$. Then for each fixed $i=1,\cdots, g$,
\begin{align}\label{Cartanformula}
\sum_{j=1,\  j\neq i}^{g} m_j\dfrac{1+\mu_i\mu_j}{\mu_i-\mu_j}=0.
\end{align}

\end{lem}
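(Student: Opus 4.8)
The plan is to turn \eqref{Cartanformula} into an algebraic identity among connection coefficients, using in an essential way that an isoparametric hypersurface has constant principal curvatures. First I would fix a point and choose a local orthonormal frame $\dkh{e_a}$ adapted to the eigendistributions $T_{\mu_1},\dots,T_{\mu_g}$ of the shape operator, writing $[a]$ for the label of the eigenvalue attached to $e_a$, so that $h_{ab}=\mu_{[a]}\delta_{ab}$. Because every $\mu_\alpha$ is constant, the covariant derivative of $A$ collapses to $h_{abc}=\kh{\mu_{[a]}-\mu_{[b]}}\Gamma_{ca}^{b}$, where $\Gamma_{ca}^{b}=\jkh{\nabla_{e_c}e_a,e_b}$ satisfies the antisymmetry $\Gamma_{ca}^{b}=-\Gamma_{cb}^{a}$. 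The Codazzi equation \eqref{Codazzieq} makes $h_{abc}$ totally symmetric, which forces $h_{abc}=0$ unless $[a],[b],[c]$ are pairwise distinct and yields $\Gamma_{ca}^{b}=h_{abc}/\kh{\mu_{[a]}-\mu_{[b]}}$ whenever $[a]\neq[b]$. Equivalently, the Codazzi equation $\kh{\nabla_X S}Y=\kh{\nabla_Y S}X$ with $X\in T_{\mu_\alpha}$, $Y\in T_{\mu_\beta}$, $\alpha\neq\beta$, projected onto $T_{\mu_\gamma}$ gives $\kh{\mu_\beta-\mu_\gamma}\kh{\nabla_X Y}_{\mu_\gamma}=\kh{\mu_\alpha-\mu_\gamma}\kh{\nabla_Y X}_{\mu_\gamma}$; this is the relation that will eventually produce the denominators $\mu_i-\mu_j$ in \eqref{Cartanformula}.

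The second step is to compute the sectional curvatures of $M$ in two ways. Fix a frame vector $e_a$ with $[a]=\alpha$. On one hand, the Gauss equation \eqref{Gausseq} gives $R_{abab}=1+\mu_\alpha\mu_{[b]}$ for every $b$ with $[b]\neq\alpha$, so that the left-hand side of \eqref{Cartanformula} is precisely
\begin{align*}
\sum_{\beta\neq\alpha}m_\beta\,\dfrac{1+\mu_\alpha\mu_\beta}{\mu_\alpha-\mu_\beta}=\sum_{b:\,[b]\neq\alpha}\dfrac{1}{\mu_\alpha-\mu_{[b]}}\,R_{abab}.
\end{align*}
Thus \eqref{Cartanformula} is equivalent to the vanishing of this weighted sum of sectional curvatures. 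On the other hand, I would evaluate the same $R_{abab}$ intrinsically through the second structure equation, expressing it in terms of the $\Gamma_{ca}^{b}$ and their frame derivatives, and then substitute $\Gamma_{ca}^{b}=h_{abc}/\kh{\mu_{[a]}-\mu_{[b]}}$ off the diagonal. A convenient shortcut is the Ricci-type identity \eqref{tijkl}, which the paper already records: setting $k=a$, $l=b$ there gives $h_{abab}-h_{abba}=\kh{\mu_\alpha-\mu_{[b]}}\kh{1+\mu_\alpha\mu_{[b]}}$, so that the whole problem reduces to showing that the trace $\sum_{b:\,[b]\neq\alpha}\kh{\mu_\alpha-\mu_{[b]}}^{-2}\kh{h_{abab}-h_{abba}}$ vanishes once $h_{abab}$ and $h_{abba}$ are written out as second covariant derivatives of the connection coefficients.

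The main obstacle is exactly this last cancellation. Writing $h_{abc}=\kh{\mu_{[a]}-\mu_{[b]}}\Gamma_{ca}^{b}$ and differentiating once more produces quadratic combinations of the off-diagonal $\Gamma$'s, frame-derivative terms, and internal (same-eigenspace) coefficients $\Gamma_{ca}^{b}$ with $[a]=[b]$, the latter two families being \emph{not} controlled by $A$. The substance of Cartan's argument is that, after weighting by $\kh{\mu_\alpha-\mu_{[b]}}^{-2}$ and summing over a full orthonormal basis of each eigenspace $T_{\mu_\beta}$ and over all $\beta\neq\alpha$, the off-diagonal quadratic terms cancel in pairs under the exchange of the two distinct eigenspace labels (via $\Gamma_{ca}^{b}=-\Gamma_{cb}^{a}$, the total symmetry of $h_{abc}$, and its vanishing off pairwise-distinct triples), while the uncontrolled derivative and internal terms assemble into traces over complete eigenspaces that drop out because the weights depend only on the eigenvalue labels, which are constant. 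Carrying out this bookkeeping so that precisely the pieces not expressible through $A$ disappear is where the real work lies; once it is done, the intrinsic side is $0$ and the Gauss side is the Cartan sum, giving \eqref{Cartanformula}. As a consistency check, for $g=2$ the identity degenerates to $1+\mu_\alpha\mu_\beta=0$, the familiar relation satisfied by the standard products $\mathbb S^p\times\mathbb S^q\subset\mathbb S^{n+1}$.
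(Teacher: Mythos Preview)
The paper does not supply a proof of this lemma; it is simply quoted from \cite{BerConOlm03}. So there is no ``paper's own proof'' to compare against, only the standard textbook argument in the cited reference.

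Your outline is aimed in the right direction and assembles the correct ingredients: constancy of the $\mu_j$ to write $h_{abc}=(\mu_{[a]}-\mu_{[b]})\Gamma^b_{ca}$, Codazzi to make $h_{abc}$ totally symmetric and to kill it off pairwise-distinct eigenvalue triples, Gauss to produce the numerators $1+\mu_i\mu_j$, and then a second-order curvature identity to force the weighted sum to vanish. This is indeed the skeleton of Cartan's classical proof, and your $g=2$ sanity check is correct.

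However, as written the proposal is not yet a proof. You explicitly flag the cancellation of the quadratic $\Gamma$-terms and of the ``uncontrolled'' derivative and same-eigenspace terms as ``where the real work lies'', and then describe the desired outcome in words (``cancel in pairs'', ``assemble into traces that drop out'') without carrying it out. That bookkeeping is the entire content of Cartan's identity; it does not follow formally from the symmetries you list, and in particular the claim that the frame-derivative terms and the internal $\Gamma^b_{ca}$ with $[a]=[b]$ disappear ``because the weights depend only on the eigenvalue labels'' is not justified---constancy of the labels alone does not kill a derivative term. One must actually expand $R_{abab}$ in the adapted frame, sum with the weights $(\mu_\alpha-\mu_{[b]})^{-1}$, and verify the vanishing term by term (or, as in the cited reference, recast the sum as a mean-curvature-type trace that is more visibly zero). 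Until that computation is exhibited, what you have is a correct plan rather than a demonstration.
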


Although isopermetric hypersurfaces are completely classified, we can give a direct proof of the following result by Cartan's fundamental formula.
\begin{lem}\label{lem1}
Assume $M$ is a minimal isoparametric hypersurface in $\mathbb S^5$ with zero Gauss curvature, then $M$ is totally geodesic.
\end{lem}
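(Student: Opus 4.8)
The plan is to run through the possible values of the number $g$ of distinct principal curvatures. Since $M^4\subset\mathbb S^5$ has only four principal curvatures counted with multiplicity, automatically $g\le 4$. Write $\mu_1,\dots,\mu_g$ for the distinct principal curvatures and $m_1,\dots,m_g$ for their multiplicities, so $\sum_i m_i=4$; minimality forces $\sum_i m_i\mu_i=0$ (this is $f_1=0$ in \eqref{f1234}), while $\mathcal K=\prod_i\mu_i^{\,m_i}=0$ forces one of the values, say $\mu_1$, to vanish, with $m_1\ge 1$. If $g=1$ then $M$ is totally umbilic, and being minimal it is totally geodesic; so it suffices to rule out $g\in\{2,3,4\}$, which then leaves $g=1$ as the only possibility.

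The key computation is Cartan's formula \eqref{Cartanformula} evaluated at the zero principal curvature $\mu_1=0$: there the numerators $1+\mu_1\mu_j$ become $1$, so the identity reduces to $\sum_{j\ge 2}m_j/\mu_j=0$. Combined with the minimality relation $\sum_{j\ge 2}m_j\mu_j=0$, this pins down the nonzero part of the spectrum in each case. For $g=2$ the minimality relation alone reads $m_2\mu_2=0$, which is absurd. For $g=4$ all $m_i=1$, and the three nonzero curvatures $\mu_2,\mu_3,\mu_4$ satisfy $\mu_2+\mu_3+\mu_4=0$ and, after clearing denominators in Cartan's relation, $\mu_2\mu_3+\mu_2\mu_4+\mu_3\mu_4=0$; hence $\mu_2,\mu_3,\mu_4$ are the three roots of $t^3-\mu_2\mu_3\mu_4$ with $\mu_2\mu_3\mu_4\neq 0$, impossible since a nonzero real number has only one real cube root. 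For $g=3$, writing $\mu_2,\mu_3$ for the nonzero curvatures, minimality gives $m_2\mu_2+m_3\mu_3=0$ and Cartan's relation gives $m_2\mu_3+m_3\mu_2=0$; multiplying these and cancelling $\mu_2\mu_3\neq 0$ yields $m_2=m_3$ and then $\mu_3=-\mu_2$, so $m_1+2m_2=4$ with all $m_i\ge1$ leaves only $(m_1,m_2,m_3)=(2,1,1)$.

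The main obstacle is exactly this surviving $g=3$ candidate: Cartan's formula at $\mu_2$ merely gives $\mu_2^2=5$, which is consistent, so the fundamental formula together with minimality does not eliminate it. To finish I would invoke Münzner's theorem \cite{Mun80}, according to which an isoparametric hypersurface with an odd number of distinct principal curvatures has all multiplicities equal — contradicting $m_1=2\neq 1=m_2$ (equivalently, a $g=3$ isoparametric hypersurface in a sphere has $n\in\{3,6,12,24\}$, which excludes $n=4$). Hence $g=1$, so $A\equiv 0$ and $M$ is totally geodesic. Everything apart from this last point is short algebra, so the only input beyond Cartan's formula and minimality is the multiplicity restriction in the $g=3$ case.
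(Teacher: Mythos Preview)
Your argument is correct and follows the same architecture as the paper's proof: a case analysis on the principal curvatures using Cartan's fundamental formula, with M\"unzner's theorem invoked to kill the one configuration that Cartan alone cannot exclude. The organization differs slightly. The paper orders the curvatures $\lambda_1\le\lambda_2\le\lambda_3\le\lambda_4$, sets $\lambda_2=0$, and splits into three subcases ($\lambda_3=0$; $\lambda_3=\lambda_4$; $0<\lambda_3<\lambda_4$), evaluating \eqref{Cartanformula} at different indices in each; you instead branch on $g$ and always evaluate Cartan at the zero eigenvalue, which is a bit more uniform and dispatches the paper's second subcase automatically via your $m_2=m_3$ computation. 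For the surviving $g=3$ configuration with multiplicities $(2,1,1)$, the paper computes $\abs{A}^2=10$ from Cartan at $i=1$ and contradicts M\"unzner's formula $\abs{A}^2=(g-1)n=8$, whereas you cite M\"unzner's equal-multiplicity constraint for odd $g$ directly; both are legitimate consequences of \cite{Mun80}. One cosmetic point: your ``multiplying these'' step is clearest if read as multiplying $m_2\mu_2=-m_3\mu_3$ and $m_2\mu_3=-m_3\mu_2$ side by side to get $m_2^2\mu_2\mu_3=m_3^2\mu_2\mu_3$, hence $m_2=m_3$.
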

\begin{proof}
Assume $\lambda_1\le \lambda_2\le \lambda_3\le \lambda_4$ are the principal curvatures of $M$. By our assumption,
\begin{align*}
0=4H =\sum_{i=1}^4 \lambda_i, \quad 0=\mathcal{K}=\lambda_1\lambda_2\lambda_3 \lambda_4.
\end{align*}

Suppose $M$ is not totally geodesic,   we can assume, without loss of generality,  $\lambda_2=0$. Then the principal curvatures are
\begin{align*}
\lambda_1<\lambda_2=0\le \lambda_3 \le \lambda_4>0.
\end{align*}
There are three possibilities:
\begin{itemize}
\item $\lambda_3=0$. In this case, we have 
\begin{align*}
g=3, \mu_1=\lambda_1, \mu_2=0, \mu_3=\lambda_4=-\lambda_1, m_1=1, m_2=2, m_3=1.
\end{align*}
Therefore, for $i=1$ in \eqref{Cartanformula}, we have
\begin{align*}
\dfrac{2}{\lambda_1}+\dfrac{1-\lambda_1^2}{2\lambda_1}=0,
\end{align*}
which implies $\lambda_1=-\sqrt{5}$. Consequently,  $\abs{A}^2=\lambda_1^2+\lambda_4^2=10$. However, from M\"{u}nzner's result \cite{Mun80}, $\abs{A}^2=(g-1)n=2\times 4=8$, we get a contradiction.

\item $\lambda_3=\lambda_4>0$. In this case, we have
\begin{align*}
g=3, \mu_1=\lambda_1, \mu_2=0, \mu_3=\lambda_3=\lambda_4=-\dfrac{1}{2}\lambda_1, m_1=1, m_2=1, m_3=2.
\end{align*}
Therefore, for $i=2$ in \eqref{Cartanformula}, we obtain,
\begin{align*}
0=\dfrac{1}{0-\lambda_1}+2\dfrac{1}{0-\kh{-\frac{1}{2}\lambda_1}}=\dfrac{3}{\lambda_1},
\end{align*}
which is impossible.

\item $0<\lambda_3<\lambda_4$. In this case, we have
\begin{align*}
g=4, \mu_i=\lambda_i, m_i=1, i=1,\cdots, 4.
\end{align*}
Therefore, for $i=2$ in \eqref{Cartanformula}, we obtain,
\begin{align*}
0=\dfrac{1}{0-\lambda_1}+\dfrac{1}{0-\lambda_3}+\dfrac{1}{0-\lambda_4}=-\dfrac{\lambda_1\lambda_3+\lambda_1\lambda_4+\lambda_3\lambda_4}{\lambda_1\lambda_3\lambda_4},
\end{align*}
which implies $\lambda_1\lambda_3+\lambda_1\lambda_4 +\lambda_3\lambda_4=0$. Since $\lambda_1+\lambda_3+\lambda_4=0,$ we have
\begin{align*}
\lambda_3^2+\lambda_3\lambda_4 +\lambda_4^2=0,
\end{align*}
which is impossible for $\lambda_4>\lambda_3>0$.
\end{itemize}
Therefore, $M$ must be totally geodesic.
\end{proof}

We also need an elementary algebraic lemma.
\begin{lem}\label{algebraiclem}
Assume $a,b,c\in \mathbb R$ satisfying $a\le b\le c$ and $a+b+c=0$, then 
\begin{align*}
-\dfrac{1}{\sqrt{6}}\kh{a^2+b^2+c^2}^{3/2}\le a^3+b^3+c^3\le \dfrac{1}{\sqrt{6}}\kh{a^2+b^2+c^2}^{3/2},
\end{align*}
and 
\begin{align*}
a^3+b^3+c^3=&-\dfrac{1}{\sqrt{6}}\kh{a^2+b^2+c^2}^{3/2} \ \ \text{iff}\ \ b=c=-\dfrac{a}{2},\\
a^3+b^3+c^3=& \dfrac{1}{\sqrt{6}}\kh{a^2+b^2+c^2}^{3/2} \ \ \text{iff}\ \ a=b=-\dfrac{c}{2},\\
a^3+b^3+c^3=&0 \ \ \text{iff} \ \ b=0 \ \ \text{and} \ \ a=-c.
\end{align*}
\end{lem}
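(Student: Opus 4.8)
The plan is to reduce the inequality to a statement about when a depressed cubic has three real roots. First I would use the hypothesis $a+b+c=0$ together with the classical factorization $a^3+b^3+c^3-3abc=(a+b+c)(a^2+b^2+c^2-ab-bc-ca)$ to get $a^3+b^3+c^3=3abc$, and similarly $(a+b+c)^2=0$ gives $a^2+b^2+c^2=-2(ab+bc+ca)$. Hence, writing $s=a^2+b^2+c^2$, $q=ab+bc+ca=-s/2\le 0$ and $r=abc$, the claim becomes exactly $\abs{r}\le \frac{1}{3\sqrt6}\,s^{3/2}$, together with the corresponding descriptions of the equality cases.

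Next I would exploit that $a,b,c$ are precisely the roots of $P(t)=t^3-(a+b+c)t^2+(ab+bc+ca)t-abc=t^3+qt-r$. If $q=0$ then $s=0$, so $a=b=c=0$ and every assertion is trivial; so assume $q<0$ and write $q=-3u^2$ with $u>0$. Then $P'(t)=3(t^2-u^2)$, so $P$ has a local maximum at $t=-u$ with value $P(-u)=2u^3-r$ and a local minimum at $t=u$ with value $P(u)=-2u^3-r$. Since $P$ actually has three real roots (namely $a,b,c$), necessarily $P(-u)\ge 0$ and $P(u)\le 0$, i.e.\ $\abs{r}\le 2u^3$. Substituting $u^2=-q/3=s/6$ turns this into $\abs{r}\le 2(s/6)^{3/2}=\frac{1}{3\sqrt6}\,s^{3/2}$, and multiplying by $3$ gives the desired two-sided bound on $a^3+b^3+c^3=3r$.

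For the equality cases I would note that $\abs{r}=2u^3$ forces $P(-u)=0$ or $P(u)=0$, so $P$ has a double root at $t=-u$ or at $t=u$; since the coefficient of $t^2$ vanishes, the three roots sum to $0$, so the remaining root is $2u$ or $-2u$ respectively. When $r=2u^3$ the roots are $-u,-u,2u$, and the ordering $a\le b\le c$ (with $u>0$) forces $a=b=-u$, $c=2u$, that is $a=b=-c/2$; a direct substitution confirms $a^3+b^3+c^3=6u^3=\frac{1}{\sqrt6}s^{3/2}$, and conversely the ordering forces $c\ge 0$ in this configuration, so equality indeed holds. When $r=-2u^3$ the roots are $-2u,u,u$, forcing $b=c=-a/2$ and $a^3+b^3+c^3=-\frac{1}{\sqrt6}s^{3/2}$. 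Finally $a^3+b^3+c^3=0$ is equivalent to $r=abc=0$, i.e.\ one of $a,b,c$ is $0$; combined with $a\le b\le c$ and $a+b+c=0$ this forces the middle value $b=0$ and hence $a=-c$, and conversely $b=0,\ a=-c$ visibly gives $a^3+b^3+c^3=0$.

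I do not expect a genuine obstacle here, since the argument is elementary real algebra; the only points requiring care are separating out the degenerate case $q=0$ at the outset, and the bookkeeping in the equality discussion, where one must match the sign of $r$ with which critical value of $P$ vanishes and then invoke the ordering $a\le b\le c$ to read off which two of the three variables coincide.
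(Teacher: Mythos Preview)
Your argument is correct. The paper actually states this lemma without proof, treating it as an elementary fact, so there is no approach to compare against; your reduction to the discriminant condition for the depressed cubic $t^3+qt-r$ is a clean and complete justification, and your handling of the three equality cases (including the observation that $abc=0$ together with $a\le b\le c$ and $a+b+c=0$ forces $b=0$) is accurate.
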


\begin{lem}\label{lem2}
Let $M$ be a closed minimal hypersurface in $\mathbb S^5$ with constant scalar curvature and zero Gauss-Kronecker curvature. If the smallest (or largest) principal curvature $\lambda_1$ has multiplicity two at some point $p$, then $M$ is totally geodesic.
\end{lem}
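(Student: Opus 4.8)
The plan is to suppose $M$ is not totally geodesic and derive a contradiction. Since the scalar curvature is constant, $\abs{A}^2\equiv c_0$ is a positive constant; by \eqref{nablaA}, $\abs{\nabla A}^2=c_0(c_0-4)\ge 0$, so $c_0\ge 4$, and $c_0=4$ would give $\nabla A\equiv 0$, hence $M$ isoparametric, hence totally geodesic by Lemma \ref{lem1} — impossible; thus $c_0>4$. By \eqref{K=0}, $f_2\equiv c_0$ and $f_4\equiv\frac{1}{2}c_0^2$ are constant, so \eqref{Deltaf4} yields the pointwise identity $2\mathscr{A}+\mathscr{B}=\frac{1}{2}c_0\abs{\nabla A}^2$. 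Replacing $A$ by $-A$ if needed (this preserves minimality, $\abs{A}^2$ and $\mathcal{K}$ while swapping ``smallest'' and ``largest''), we may assume the smallest principal curvature has multiplicity two at $p$. Since $\mathcal{K}\equiv 0$, some principal curvature vanishes at $p$; together with $\lambda_1$ having multiplicity two and $\sum_i\lambda_i=0$, a short case check as in the proof of Lemma \ref{lem1} shows that, unless $A(p)=0$, the principal curvatures at $p$ are $(a,a,0,-2a)$ for some $a<0$.

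The crucial step is that $f_3$ attains its maximum at $p$. At any point one principal curvature is zero and the remaining three sum to zero, so Lemma \ref{algebraiclem} gives $f_3\le\frac{1}{\sqrt{6}}\abs{A}^3=\frac{1}{\sqrt{6}}c_0^{3/2}$, with equality exactly when the principal curvatures are $(t,t,-2t,0)$ with $t\le 0$. At $p$ one has $f_3(p)=-6a^3$ and $\abs{A}^2(p)=6a^2$, whence $f_3(p)=\frac{1}{\sqrt{6}}c_0^{3/2}$; since $\abs{A}^2$ is constant, this is $\max_M f_3$.

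I would then work in a principal frame at $p$ with $\lambda=(a,a,0,-2a)$ and pin down $\nabla A$ there. Because $p$ is a critical point of $f_3$ and $f_2,f_4$ are constant, $\nabla f_2=\nabla f_3=\nabla f_4=0$ at $p$; expanded using the minimality relation $\sum_i h_{iik}=0$, these force $h_{33k}=h_{44k}=0$ for all $k$ and $h_{11k}=-h_{22k}$. Substituting the surviving components into $2\mathscr{A}+\mathscr{B}=\frac{1}{2}c_0\abs{\nabla A}^2$ at $p$ collapses to $h_{113}^2+h_{123}^2+h_{134}^2+h_{234}^2=0$, so every component of $\nabla A$ with an index equal to $3$ vanishes at $p$, leaving only $h_{111},h_{112},h_{114},h_{124}$ (and their symmetrizations), constrained by $\abs{\nabla A}^2(p)=4(h_{111}^2+h_{112}^2)+6(h_{114}^2+h_{124}^2)=c_0(c_0-4)$.

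The main obstacle is the final step: showing these residual components also vanish, which yields $\nabla A(p)=0$, so $c_0(c_0-4)=0$, contradicting $c_0>4$ — equivalently, promoting the rigidity at the single point $p$ to all of $M$, which would make $M$ isoparametric with three distinct principal curvatures of multiplicities $2,1,1$, impossible by Lemma \ref{lem1} (or by M\"unzner's theorem). The first- and second-order conditions used above are compatible with $\nabla A(p)\neq 0$ — in particular one checks that the Hessian of $f_3$ at $p$, after using the vanishing of the Hessians of the constant functions $f_2,f_4,\mathcal{K}$, is a negative multiple of a Gram matrix and hence automatically $\le 0$ — so closing the gap needs genuinely new input: a refined second-order (Simons/Bochner-type) identity at $p$, a third-order relation from differentiating the Simons equation $\Delta h_{ij}=(4-c_0)h_{ij}$, or an integral identity playing off the structure at $\max_M f_3$ against that at $\min_M f_3$ (where the largest principal curvature would have multiplicity two). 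That is where the substance of the proof lies.
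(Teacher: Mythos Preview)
Your setup through the first-order step is correct and matches the paper: at $p$ the principal curvatures are $(-\lambda,-\lambda,0,2\lambda)$, the vanishing of $\nabla f_2,\nabla f_3,\nabla f_4$ gives $h_{33k}=h_{44k}=0$ and $h_{11k}+h_{22k}=0$, and plugging into $2\mathscr{A}+\mathscr{B}=\tfrac{1}{2}\abs{A}^2\abs{\nabla A}^2$ kills $h_{113},h_{123},h_{134},h_{234}$. But the closing argument you propose --- forcing the remaining components $h_{111},h_{112},h_{114},h_{124}$ to vanish so that $\abs{\nabla A}(p)=0$ --- is not what works, and as you yourself note the first- and second-order constraints you have written down are compatible with $\nabla A(p)\neq 0$. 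The paper does \emph{not} show $\nabla A(p)=0$.

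The missing idea is to exploit the Ricci commutation formula \eqref{tijkl} rather than push for more vanishing of $\nabla A$. Since $H$, $\abs{A}^2$ and $f_4$ are globally constant, their Hessians vanish identically; writing out $H_{,kl}=(\abs{A}^2)_{,kl}=(f_4)_{,kl}=0$ at $p$ gives, for each pair $(k,l)$, a linear system in $h_{11kl}+h_{22kl}$, $h_{33kl}$, $h_{44kl}$ with inhomogeneous terms built from the surviving first-order data. Taking $(k,l)=(3,3)$ the inhomogeneous terms vanish (every $h_{ij3}=0$) and one solves $h_{4433}=0$; taking $(k,l)=(4,4)$ one solves $h_{3344}=0$. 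Hence $h_{3344}-h_{4433}=0$. But \eqref{tijkl} with $i=k=3$, $j=l=4$ gives $h_{3344}-h_{4433}=(\lambda_3-\lambda_4)(1+\lambda_3\lambda_4)=-2\lambda\neq 0$, a contradiction. So the ``genuinely new input'' you were looking for is simply the non-commutativity of second covariant derivatives of $A$, applied to the pair of indices $(3,4)$ singled out by the eigenvalue configuration.
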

\begin{proof}
Assume $M$ is not totally geodesic, but the multiplicity of the smallest principal curvature $\lambda_1$ is two at $p$.
Since $\mathcal{K}=0$, then at  $p$ we can assume the principal curvatures are
\begin{align}
\lambda_1=\lambda_2=-\lambda<0=\lambda_3<\lambda_4=2\lambda>0.\label{lambda1multi2}
\end{align}
At the point $p$, by Lemma \ref{algebraiclem}, $f_3=\frac{1}{\sqrt{6}}\abs{A}^3$ attains $f_3$'s maximum. Therefore at $p$, we have for each $k$
\begin{align}
\begin{cases}
&H_{,k}=0;\\
&\kh{\abs{A}^2}_{,k}=0;\\
&\kh{f_3}_{,k}=0;\\
&\kh{f_4}_{,k}=0,
\end{cases}
\ \ \text{which yields}\ \ 
\begin{cases}
&\sum_i h_{iik}=0,\\
&\sum_i \lambda_i h_{iik}=0,\\
&\sum_i \lambda_i^2 h_{iik}=0,\\
&\sum_i \lambda_i^3 h_{iik}=0,
\end{cases} \label{lineareq1}
\end{align}
Consider that the value of principal curvatures \eqref{lambda1multi2}, we have 
\begin{align*}
h_{11k}+h_{22k}=0, \ \ h_{44k}=h_{33k}=0.
\end{align*}
We have 
\begin{align*}
\abs{\nabla A}^2=&4\kh{h_{111}^2+h_{112}^2}+6\kh{h_{113}^2+h_{114}^2+h_{123}^2+h_{124}^2+h_{134}^2+h_{234}^2}.\\
\mathscr{A}=&\sum_{i,j,k}\lambda_i^2 h_{ijk}^2\\
=&\lambda^2\sum_{j,k}\kh{h_{1jk}^2+h_{2jk}^2+4h_{4jk}^2}\\
=&\dfrac{1}{6}\abs{A}^2\zkh{4\kh{h_{111}^2+h_{112}^2+h_{113}^2+h_{123}^2}+10\kh{h_{234}^2+h_{134}^2}+12\kh{h_{114}^2+h_{124}^2}}.\\
\mathscr{B}=&\sum_{i,j,k}\lambda_i\lambda_jh_{ijk}^2\\
=&\lambda^2\sum_k\kh{h_{11k}^2+h_{22k}^2+2h_{12k}^2-4h_{14k}^2-4h_{24k}^2}\\
=&\lambda^2\zkh{2\kh{h_{113}^2+h_{123}^2}+4\kh{h_{111}^2+h_{112}^2-h_{134}^2-h_{234}^2}-6\kh{h_{114}^2+h_{124}^2} }.
\end{align*}
Therefore, combined  the above formula with \eqref{Deltaf4}, we have,
\begin{align*}
&3\zkh{4\kh{h_{111}^2+h_{112}^2}+6\kh{h_{113}^2+h_{114}^2+h_{123}^2+h_{124}^2+h_{134}^2+h_{234}^2}}\\
=&
3\abs{\nabla A}^2\\
=&3\abs{A}^2\kh{\abs{A}^2-4}\\
=&\dfrac{1}{\lambda^2}\kh{2\mathscr{A}+\mathscr{B}}\\
=&12\kh{h_{111}^2+h_{112}^2}+10\kh{h_{113}^2+h_{123}^2}+16\kh{h_{234}^2+h_{134}^2}+18\kh{h_{114}^2+h_{124}^2},
\end{align*}
which implies
\begin{align*}
0=8\kh{h_{113}^2+h_{123}^2}+2\kh{h_{134}^2+h_{234}^2}.
\end{align*}
Consequently,
\begin{align*}
h_{113}=h_{123}=h_{134}=h_{234}=0.
\end{align*}
Moreover,  since $H, \abs{A}^2$ and  $f_4$ are all constants,
we have, for all $1\le k,l\le 4$,
\begin{align*}
\begin{cases}
&H_{,kl}=0,\\
&\kh{\abs{A}^2}_{,kl}=0,\\
&\kh{f_4}_{,kl}=0.
\end{cases}
\end{align*}
A direct computation yields,\begin{align}\label{hijkl}
\begin{cases}
&\sum_i h_{iikl}=0,\\
&\sum_{i,j} \kh{h_{ijk}h_{ijl}+h_{ij}h_{ijkl}}=0,\\
&\sum_{i}\kh{\lambda_i^3h_{iikl}}+\sum_{i,j}\kh{2\lambda_i^2h_{ijk}h_{ijl}+\lambda_i\lambda_jh_{ijk}h_{ijl}}=0.
\end{cases}
\end{align}
Note that, at the point $p$, we have \eqref{lambda1multi2}
and
\begin{align*}
h_{11k}+h_{22k}=0, h_{33k}=h_{44k}=0, h_{113}=h_{123}=h_{134}=h_{234}=0.
\end{align*}
Therefore, for $k=l=3$, \eqref{hijkl} becomes,
\begin{align*}
\begin{cases}
&h_{1133}+h_{2233}+h_{3333}+h_{4433}=0,\\
&-\lambda h_{1133}-\lambda h_{2233}+2\lambda h_{4433}=0,\\
&-\lambda^3 h_{1133}-\lambda^3 h_{2233}+8\lambda^3 h_{4433}=0,
\end{cases}
 \end{align*}
which implies $h_{4433}=0$.
While for $k=l=4$, \eqref{hijkl} becomes,
\begin{align*}
&\begin{cases}
&h_{1144}+h_{2244}+h_{3344}+h_{4444}=0,\\
&-\lambda h_{1144}-\lambda h_{2244}+2\lambda h_{4444}=-2\kh{h_{114}^2+h_{124}^2},\\
&-\lambda^3 h_{1144}-\lambda^3 h_{2244}+8\lambda^3 h_{4444}=-6\lambda^2 \kh{h_{114}^2+h_{124}^2},
\end{cases}
\end{align*}
which implies $h_{3344}=0$. Consequently, 
\begin{align*}
h_{3344}-h_{4433}=0.
\end{align*}
However, \eqref{tijkl} and \eqref{lambda1multi2} yield 
\begin{align*}
h_{3344}-h_{4433}=-2\lambda<0.
\end{align*}
We get a contradiction.
\end{proof}

In dimension four, the Gauss-Bonnet-Chern formula is quite special. In our case, we have the following lemma.
\begin{lem}\label{GBClem}
Assume $M$ is a closed minimal hypersurface in $\mathbb S^5$ with second fundamental form $A$, Euler number $\chi(M)$,  and $f_4$ is defined by \eqref{f_k}. Then the Gauss-Bonnet-Chern formula becomes
\begin{align}\label{GBC}
\int_M \kh{\dfrac{3}{2}\abs{A}^4 -3f_4-2\abs{A}^2+12}=16\pi^2\chi\kh{M}.
\end{align}
\end{lem}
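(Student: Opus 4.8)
The plan is to derive the Gauss--Bonnet--Chern integrand for a four-dimensional Riemannian manifold and then specialize to our hypersurface using the Gauss equation. Recall that for a closed oriented $4$-manifold, the Chern--Gauss--Bonnet formula reads $\chi(M)=\frac{1}{8\pi^2}\int_M\kh{\abs{W}^2-\tfrac12\abs{\mathrm{Ric}^\circ}^2+\tfrac{1}{24}R_M^2}$, or equivalently, in the more convenient ``Pfaffian'' form, $32\pi^2\chi(M)=\int_M\kh{\abs{R_M^{\mathrm{scal}}}^2-4\abs{R_M^{\mathrm{ric}}}^2+\abs{\mathrm{Rm}}^2}$ with suitable normalizations of the curvature pieces. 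Rather than juggle these normalizations, I would work directly from the Pfaffian: in an orthonormal frame $e_1,\dots,e_4$, the Gauss--Bonnet integrand is $\frac{1}{8\pi^2}\cdot\frac18\sum \epsilon^{ijkl}\epsilon^{mnpq}R_{ijmn}R_{klpq}$ (indices running $1$ to $4$), and the task is to expand this using $R_{ijkl}=\delta_{ik}\delta_{jl}-\delta_{il}\delta_{jk}+h_{ik}h_{jl}-h_{il}h_{jk}$ from \eqref{Gausseq}.

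The key computational step is to substitute the Gauss equation into the Pfaffian and collect terms by degree in $h$. The ``constant'' part (all $\delta$'s) contributes the Euler characteristic of $\mathbb{S}^4$ times a normalization, i.e. a term proportional to the volume with coefficient producing the $+12$ after integration (since $\int_M 12$ corresponds to the round metric's contribution scaled by the relevant constant). The part quadratic in $h$, which is linear in the $hh$ term paired with the $\delta\delta$ term, produces (using $\sum_i h_{ii}=0$) a multiple of $\abs{A}^2$, giving the $-2\abs{A}^2$ term. The part quartic in $h$ (the $hh$ term paired with itself) is a symmetric function of the principal curvatures; evaluating $\sum\epsilon\epsilon\, h\,h\,h\,h$ in a frame diagonalizing $A$ yields a combination of $\sigma_2^2$, $\sigma_4$, and such, which via the Newton identities \eqref{f1234} rewrites as $\frac32\abs{A}^4-3f_4$ (indeed $\frac32 f_2^2-3f_4 = \frac32 f_2^2 - 3(\frac12 f_2^2 - 4\mathcal K) = 12\mathcal K = 12\sigma_4$ up to the minimality-induced vanishing of $\sigma_1,\sigma_3$, so the quartic term is essentially $12\mathcal K = 12\sigma_4$, consistent with the classical fact that for a minimal hypersurface the Gauss--Kronecker curvature is the leading Pfaffian term). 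Matching the overall constant so that a totally geodesic $M=\mathbb S^4$ gives $\chi=2$ fixes the right-hand side as $16\pi^2\chi(M)$.

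Concretely, I would: (i) write the Pfaffian $4$-form $\mathrm{Pf}(\Omega)$ in terms of the curvature $2$-forms $\Omega_{ij}=\sum_{k<l}R_{ijkl}\,\omega^k\wedge\omega^l$; (ii) use the structure equation $\Omega_{ij}=\omega^i\wedge\omega^j + \theta^i\wedge\theta^j$ where $\theta^i=\sum_j h_{ij}\omega^j$ is the shape-operator-twisted coframe (this is exactly the content of \eqref{Gausseq}); (iii) expand $\mathrm{Pf} = \frac{1}{8\pi^2}\,\Omega_{12}\wedge\Omega_{34} - \Omega_{13}\wedge\Omega_{24}+\Omega_{14}\wedge\Omega_{23}$, obtaining four groups of terms by how many $\theta$-factors appear; (iv) evaluate the $0$-$\theta$ term as the volume form times a constant, note the $1$-$\theta$ and $3$-$\theta$ terms vanish because they contain $\sum_i h_{ii}=\mathrm{tr}\,A=0$ after antisymmetrization, evaluate the $2$-$\theta$ term as $-c_1\abs{A}^2\,\mathrm{dvol}$, and the $4$-$\theta$ term as $c_2\,\sigma_4\,\mathrm{dvol}$; (v) convert $\sigma_4$ and $\abs{A}^2$ into $f_4$ and $\abs{A}^2$ via \eqref{f1234}, giving $\sigma_4 = \frac14 f_4 - \frac18\abs{A}^4$ rearranged, i.e. the combination $\frac32\abs{A}^4 - 3f_4 = 12\sigma_4$; (vi) fix all constants by testing on the totally geodesic sphere ($\abs{A}^2=0$, $f_4=0$, $\chi=2$): the left side is $\int_M 12 = 12\,\mathrm{Vol}(\mathbb S^4)=12\cdot\frac{8\pi^2}{3}=32\pi^2 = 16\pi^2\cdot 2$, confirming the coefficient.

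The main obstacle is purely bookkeeping: getting the combinatorial coefficients in the Pfaffian expansion right (the factors of $8$, the sign pattern from $\epsilon^{ijkl}$, and the multiplicities when contracting $\sum\epsilon\epsilon\,h_{ik}h_{jl}h_{pm}h_{qn}$ in a diagonalizing frame). A clean way to sidestep sign errors is to first record the well-known coordinate Chern--Gauss--Bonnet integrand $\frac{1}{32\pi^2}\kh{\abs{\mathrm{Rm}}^2-4\abs{\mathrm{Ric}}^2+R_M^2}$ for closed $4$-manifolds and then compute each of $\abs{\mathrm{Rm}}^2$, $\abs{\mathrm{Ric}}^2$, $R_M^2$ for our hypersurface from \eqref{Gausseq}: one finds $R_M = 12-\abs{A}^2$, $\abs{\mathrm{Ric}}^2 = \sum_{i,j}R_{ij}^2$ with $R_{ij}=3\delta_{ij}-\sum_k h_{ik}h_{kj}$ (using minimality), and $\abs{\mathrm{Rm}}^2$ expanded directly; assembling and simplifying with Newton's identities delivers exactly $\frac32\abs{A}^4-3f_4-2\abs{A}^2+12$ inside the integral, with right-hand side $16\pi^2\chi(M)$. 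Either route reduces the lemma to a finite symmetric-function identity in $\lambda_1,\dots,\lambda_4$ subject to $\sigma_1=\sigma_3=0$, which is elementary to verify.
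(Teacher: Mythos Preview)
Your approach is essentially the same as the paper's: both substitute the Gauss equation \eqref{Gausseq} into a standard form of the four-dimensional Gauss--Bonnet--Chern integrand and simplify using minimality and Newton's identities. The only cosmetic difference is the decomposition chosen: the paper uses the form $\int_M\bigl(\tfrac{R_M^2}{3}-|\mathrm{Ric}|^2+\tfrac{|W|^2}{2}\bigr)=16\pi^2\chi(M)$ and computes $|W|^2=\tfrac{7}{3}|A|^4-4f_4$, while your second route uses the equivalent $\int_M\bigl(|\mathrm{Rm}|^2-4|\mathrm{Ric}|^2+R_M^2\bigr)=32\pi^2\chi(M)$ and computes $|\mathrm{Rm}|^2$ instead; the two are related by the algebraic identity $|\mathrm{Rm}|^2=|W|^2+2|\mathrm{Ric}|^2-\tfrac13R_M^2$ in dimension four.

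One slip to correct: you repeatedly assume $\sigma_3=0$, but minimality only gives $\sigma_1=0$; in general $f_3=3\sigma_3$ need not vanish (it does vanish precisely in the Willmore case, cf.\ the discussion after Conjecture~\ref{StrongversionChernconj}). Fortunately the computation never actually uses $\sigma_3=0$, since every curvature invariant appearing ($R_M$, $|\mathrm{Ric}|^2$, $|\mathrm{Rm}|^2$ or $|W|^2$) is even in the principal curvatures. Relatedly, in your Pfaffian expansion the odd-$\theta$ terms are absent for the structural reason that each $\Omega_{ij}=\omega^i\wedge\omega^j+\theta^i\wedge\theta^j$ contributes $0$ or $2$ factors of $\theta$, not because of $\mathrm{tr}\,A=0$; and your intermediate expression for $\sigma_4$ has a sign flip (it should read $\sigma_4=\tfrac18|A|^4-\tfrac14 f_4$), though your final identity $\tfrac32|A|^4-3f_4=12\sigma_4$ is correct.
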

\begin{proof}
For a closed 4-dimensional manifold, the Gauss-Bonnet-Chern formula (see \cite{Ave63} or \cite{Bes87}) reads
\begin{align}\label{GBC2nd}
\int_M \kh{\frac{R_M^2}{3} -\abs{\rm Ric}^2+\frac{\abs{W}^2}{2}}=16\pi^2 \chi(M),
\end{align}
where $R_M, Ric, W$ are the scalar curvature, Ricci tensor, Weyl curvature tensor of $M$ respectively, which can be calculated as follows (see \cite[p.117]{Aub98}),
\begin{align*}
R_M=&\sum_{i,j} R_{ijij},\\
\abs{{\rm Ric}}^2 =&\sum_{i,j} R_{ij}^2=:\sum_{i,j} \kh{\sum_k R_{ikjk}}^2,\\
\abs{W}^2=& \sum_{i,j,k,l} W_{ijkl}^2\\
=&\sum_{i,j,k,l} \zkh{R_{ijkl}-\dfrac{1}{2}\kh{R_{ik}\delta_{jl}-R_{il}\delta_{jk}+R_{jl}\delta_{ik}-R_{jk}\delta_{il}}+\dfrac{R_M}{6}\kh{\delta_{ik}\delta_{jl}-\delta_{il}\delta_{jk}}}^2.
\end{align*}
 Then combined the above formulas and Gauss equation \eqref{Gausseq}, a direct computation yields
\begin{align*}
R_M=&12-\abs{A}^2,\\
\abs{Ric}^2=&36-6\abs{A}^2+f_4,\\
\abs{W}^2=&\dfrac{7}{3}\abs{A}^4-4f_4.
\end{align*}
Substituting the above formulas into Gauss-Bonnet-Chern formula \eqref{GBC}, we obtain \eqref{GBC2nd}.
\end{proof}

\noindent{\bf Remark.}
Since for $n\ge 4$, locally conformally flat is equivalent to $W\equiv 0$. Therefore, by the proof of the above lemma, a minimal hypersurface in $\mathbb S^5$ is locally conformally flat iff $\abs{A}^4=\dfrac{12}{7}f_4$. By a direct computation, the trace-free Ricci tensor $\mathring{Ric}=Ric -\dfrac{R_M}{4}Id$ satisfies
\begin{align*}
\abs{\mathring{Ric}}^2=f_4 -\dfrac{1}{4}\abs{A}^4.
\end{align*}
Hence, $M$ is Einstein iff $\mathring{Ric}\equiv 0$, i.e., $4f_4 =\abs{A}^4$.

We also need a result due to Cheng-Yang.
\begin{lem}[{\cite[Theorem 2]{YanChe98}}]\label{ChengYanglemma}
Let $M^4$ be a closed minimal hypersurface of $S^{5}$ with constant scalar curvature and constant $f_4$. If $\abs{A}^2>4$, then $\abs{A}^2 \ge \frac{20}{3}$.
\end{lem}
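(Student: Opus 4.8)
The plan is to run the Peng–Terng type integral method, specialized to $n=4$ with the extra hypothesis that $f_4$ is constant, and to reduce everything to an algebraic inequality in the admissible principal curvatures. Throughout write $c:=\abs{A}^2$, which by hypothesis is a positive constant with $c>4$, and work in a local frame diagonalizing $A$ at each point, so that $h_{ijk}$ is totally symmetric by the Codazzi equation \eqref{Codazzieq}. The aim is to show that the interval $4<c<\tfrac{20}{3}$ leads to a contradiction.

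First I would convert the two constancy hypotheses into pointwise identities, exactly as was done in the proof of Lemma \ref{lem2}. Since $c$ is constant, Simons' identity gives $\abs{\nabla A}^2=c(c-4)$ pointwise \eqref{nablaA}, and differentiating $\abs{A}^2$ and $f_4$ yields the first–order linear constraints $\sum_i h_{iik}=0$, $\sum_i \lambda_i h_{iik}=0$ and $\sum_i \lambda_i^3 h_{iik}=0$ for every $k$ (as in \eqref{lineareq1}). Because $f_4$ is constant we also have $\Delta f_4=0$, so \eqref{Deltaf4} collapses to the pointwise relation $2\mathscr A+\mathscr B=(c-4)f_4$. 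These identities, together with $f_1=0$, tie $\mathscr A,\mathscr B$ and $\abs{\nabla A}^2$ to the power sums $f_2=c$, $f_3$, $f_4$.

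The main engine is the second–order (Bochner) Simons identity for $\nabla A$. Commuting covariant derivatives in $\tfrac12\Delta\abs{\nabla A}^2$ and using the minimal Simons equation $\Delta h_{ij}=(4-c)h_{ij}$ produces
\[
\tfrac12\Delta\abs{\nabla A}^2=\abs{\nabla^2 A}^2+(4-c)\abs{\nabla A}^2+\Phi,
\]
where $\Phi$ is a universal linear combination of $\mathscr A$, $\mathscr B$ and $\abs{\nabla A}^2$ whose coefficients come from substituting the Gauss equation \eqref{Gausseq} into the curvature terms. Since $\abs{\nabla A}^2$ is constant the left side integrates to $0$, giving
\[
\int_M\abs{\nabla^2 A}^2=(c-4)\int_M\abs{\nabla A}^2-\int_M\Phi.
\]
To bound the left side from below I would use the Ricci identity \eqref{tijkl}: the antisymmetric part $h_{ijkl}-h_{ijlk}$ is controlled by $(\lambda_i-\lambda_j)(1+\lambda_i\lambda_j)$, whence, up to a combinatorial factor, $\abs{\nabla^2 A}^2\ge \sum_{i<j}(\lambda_i-\lambda_j)^2(1+\lambda_i\lambda_j)^2$. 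Combining this lower bound with the displayed identity yields an integral inequality whose integrand is a polynomial in the $\lambda_i$'s and in $\mathscr A,\mathscr B,\abs{\nabla A}^2$.

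The final step is to feed in the pointwise constraints, using the first–order relations and $2\mathscr A+\mathscr B=(c-4)f_4$ to eliminate the gradient quantities, thereby reducing the inequality to a pointwise algebraic statement in the principal curvatures subject to $f_1=0$ and $f_2=c$. Optimizing this polynomial over the admissible configurations, in the spirit of Lemma \ref{algebraiclem}, should show that the inequality is incompatible with $4<c<\tfrac{20}{3}$, forcing $c\ge\tfrac{20}{3}$. I expect the main obstacle to be twofold: first, pinning down the exact coefficients in $\Phi$ together with the sharp combinatorial lower bound for $\abs{\nabla^2 A}^2$ from \eqref{tijkl} — both are delicate, because the first–order constraints annihilate many components of $h_{ijk}$ and this cancellation must be exploited so as not to lose the sharp constant; and second, carrying out the constrained optimization so that the threshold comes out to be exactly $\tfrac{20}{3}$ rather than the weaker general bound $\tfrac{10n}{7}=\tfrac{40}{7}$ available without the constancy of $f_4$.
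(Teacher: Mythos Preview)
The paper does not give a self-contained proof of this lemma: it is quoted from Yang--Cheng \cite{YanChe98}, and the accompanying Remark only explains how to tweak their argument (originally stated for constant $f_3$) to the present hypothesis that $f_4$ is constant. Your outline is the Peng--Terng machinery that underlies Yang--Cheng, so the skeleton is right, but you miss precisely the point the Remark isolates.

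Yang--Cheng's Lemma~3.1 needs \emph{both} identities \eqref{Deltaf3} and \eqref{Deltaf4}: $\Delta f_3=0$ gives $\mathscr C=\tfrac12(\abs A^2-4)f_3$, and $\Delta f_4=0$ gives $2\mathscr A+\mathscr B=(\abs A^2-4)f_4$. Under the present hypotheses only $f_4$ is constant, so the second identity holds at every point, while the first need not. The paper's adaptation is to choose a single point $x_0\in M$ at which the $f_3$ identity also holds, and run Yang--Cheng's Lemma~3.1 there; since $\abs A^2$ is a constant, establishing $\abs A^2\ge \tfrac{20}{3}$ at one point suffices. Your plan instead tries to discard the $f_3$ information entirely and close the argument with the single constraint $2\mathscr A+\mathscr B=(\abs A^2-4)f_4$. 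But the curvature term $\Phi$ in the $\tfrac12\Delta\abs{\nabla A}^2$ identity is not a scalar multiple of $2\mathscr A+\mathscr B$, so one linear relation cannot eliminate both $\mathscr A$ and $\mathscr B$; the slack you leave in that step is exactly what pushes you back toward the weaker $\tfrac{40}{7}$ bound you mention. The missing ingredient is the pointwise-at-$x_0$ use of \eqref{Deltaf3} that the Remark highlights.
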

\noindent{\bf Remark.} Cheng-Yang actually proved the above lemma for general dimension $n$ and with the assumption that $f_3$ is constant instead of $f_4$ is constant.  The key ingredients of their proof are equalities \eqref{Deltaf3} and \eqref{Deltaf4}. When $f_4$ is constant, we can change a little in the proof of \cite[Lemma 3.1]{YanChe98}, such that 
\begin{align*}
0=&3\kh{4-\abs{A}^2}f_3+6\mathscr{C},  \ \ \text{at a point} \ \ x_0\in M,\\
0=& 4\kh{4-\abs{A}^2}f_4 +4\kh{2\mathscr{A}+\mathscr{B}}, \ \ \text{at all points in }\ M.
\end{align*}
Consider at $x_0$, we can get the same result as \cite[Lemma 3.1]{YanChe98}, and consequently we can obtain Lemma \ref{ChengYanglemma}. Cheng-Yang also mentioned this lemma in \cite[Remark 1]{YanChe98}.

Now we can give the proof of Theorem \ref{mainthm}.

\begin{proof}[Proof of Theorem \ref{mainthm}] Suppose $M$ is not totally geodesic, then from Lemma \ref{lem2}, we know the smallest principal curvature $\lambda_1$ can not has multiplicity two at any point. Since $\mathcal{K}=\lambda_1\lambda_2\lambda_3\lambda_4=0$, $\lambda_1$ can not has multiplicity 3 at any point either. Therefore, $\lambda_1$ always has multiplicity one.  Consequently, the unit eigenvectors of
$\lambda_1$ form a nowhere vanishing vector field on $M$, which implies $\chi(M)=0$ by the Poincar\'{e}-Hopf theorem. Combined this fact with $\mathcal{K}\equiv 0$ and relation \eqref{K=0}, the Gauss-Bonnet-Chern formula \eqref{GBC} is reduced to
\begin{align*}
\int_M \kh{\abs{A}^2-6}=0,
\end{align*}
which implies $\abs{A}^2=6$. 

On the other hand, since $M$ has constant scalar curvature Simons' identity implies $\abs{A}^2\ge 4$. However, $\abs{A}^2=4$ iff $M$ is a Clifford hypersurface (\cite{ChedoCKob70, Law69}), which is impossible since the Clifford hypersurfaces have non-zero Gauss curvature. Thus, we have $\abs{A}^2>4$. Notice that, $\mathcal{K}=0$ implies $f_4=\frac{1}{2}\abs{A}^4$ is constant. Consequently, we have $\abs{A}^2\ge\frac{20}{3}$ from Lemma \ref{ChengYanglemma}.  We get a contradiction.
\end{proof}

\vspace{1cm}

\providecommand{\bysame}{\leavevmode\hbox to3em{\hrulefill}\thinspace}
\providecommand{\MR}{\relax\ifhmode\unskip\space\fi MR }


\end{document}